\theoremstyle{plain}
\newtheorem{theorem}{Theorem}[section]
\newtheorem{corollary}[theorem]{Corollary}
\newtheorem{proposition}[theorem]{Proposition}
\newtheorem{lemma}[theorem]{Lemma}
\theoremstyle{definition}
\newtheorem{example}[theorem]{Example}
 \DeclareMathOperator{\re}{Re\,}
 \DeclareMathOperator{\NRA}{NRA}
\newcommand{\R}{\mathbb{R}}
\newcommand{\N}{\mathbb{N}}
\newcommand{\eps}{\varepsilon}
\renewcommand{\leq}{\leqslant}
\renewcommand{\geq}{\geqslant}
\begin{document}
\title[Numerical radius attaining compact operators]{Numerical radius attaining compact linear operators}

\dedicatory{Dedicated to Richard Aron on the occasion of his retirement from Kent State University}

\author[Capel]{\'{A}ngela Capel}
\author[Mart\'{\i}n]{Miguel Mart\'{\i}n}
\author[Mer\'{\i}]{Javier Mer\'{\i}}

\address[Capel]{Instituto de Ciencias Matem\'{a}ticas CSIC-UAM-UC3M-UCM \\ C/ Nicol\'{a}s Cabrera 13-15 \\ 28049 Madrid \\ Spain
\newline
\href{http://orcid.org/0000-0001-6713-6760}{ORCID: \texttt{0000-0001-6713-6760} }
 }
\email{angela.capel@icmat.es}

\address[Mart\'{\i}n]{Departamento de An\'{a}lisis Matem\'{a}tico \\ Facultad de
 Ciencias \\ Universidad de Granada \\ 18071 Granada\\ Spain
\newline
\href{http://orcid.org/0000-0003-4502-798X}{ORCID: \texttt{0000-0003-4502-798X} }
 }
\email{mmartins@ugr.es}

\address[Mer\'{\i}]{Departamento de An\'{a}lisis Matem\'{a}tico \\ Facultad de
 Ciencias \\ Universidad de Granada \\ 18071 Granada\\ Spain
\newline
\href{http://orcid.org/0000-0002-0625-5552}{ORCID: \texttt{0000-0002-0625-5552} }
 }
\email{jmeri@ugr.es}

\begin{abstract}
We show that there are compact linear operators on Banach spaces which cannot be approximated by numerical radius attaining operators.
\end{abstract}

\thanks{First author partially supported by a La Caixa-Severo Ochoa grant. Second and third authors partially supported by Spanish MINECO and FEDER project no.\ MTM2012-31755 and by Junta de Andaluc\'{\i}a and FEDER grant FQM-185.}

\subjclass[2010]{Primary 46B04; Secondary 46B20, 47A12}
\keywords{Banach space; numerical radius attaining; norm attaining; compact linear operator}

\maketitle

\section{Introduction}

Given a real or complex Banach space $X$, we write $S_X$ and $B_X$ to denote its unit sphere and its unit ball, respectively, and $X^*$ for the topological dual space of $X$. If $Y$ is another Banach space, $L(X,Y)$ denotes the space of all bounded linear operators from $X$ to $Y$ and we just write $L(X)$ for $L(X,X)$. The space of all compact linear operators on $X$ will be denoted by $K(X)$. We consider the set
$$
\Pi(X)=\bigl\{(x,x^*)\in X\times X^*\,:\, x\in S_{X},\,x^*\in S_{X^*},\, x^*(x)=1\bigr\}.
$$
The \emph{numerical range} of $T\in L(X)$ is the subset of the base field given by
$$
V(T)=\bigl\{x^*(Tx)\,:\, (x,x^*)\in \Pi(X)\bigr\}.
$$
A complete survey on numerical ranges and their relations to spectral theory of operators can be found in the books by F.~Bonsall and J.~Duncan \cite{B-D1,B-D2}, where we refer the reader for general information and background. The recent development of this topic can be found in \cite{Cabrera-Rodriguez} and references therein.

The \emph{numerical radius} of $T\in L(X)$ is given by
$$
v(T)=\sup\bigl\{|\lambda|\,:\,\lambda\in V(T)\bigr\}.
$$
It is clear that $v$ is a continuous seminorm which satisfies $v(T)\leq \|T\|$ for every $T\in L(X)$. It is said that $T$ \emph{attains its numerical radius} when the supremum defining $v(T)$ is actually a maximum. We will denote by $\NRA(X)$ the set of numerical radius attaining operators on $X$. One clearly has that $\NRA(X)=L(X)$ if $X$ is finite-dimensional. Even in a separable Hilbert space it is not difficult to find diagonal operators which do not attain their numerical radii. Our paper deals with the study of the density of $\NRA(X)$. This study was started in the PhD dissertation of B.~Sims of 1972 (see \cite{BS}), parallel to the study of norm attaining operators initiated by J.~Lindenstrauss in 1963 \cite{Lindens} (recall that a bounded linear operator $T$ between two Banach spaces $X$ and $Y$ is said to \emph{attain its norm} if there is $x\in S_X$ such that $\|Tx\|=\|T\|$). Among the positive results on this topic, we would like to mention that the set of numerical radius attaining operators is dense for Banach spaces with the Radon-Nikod\'{y}m property (M.~Acosta and R.~Pay\'{a} \cite{AP-RNP}) and for $L_1(\mu)$ spaces (M.~Acosta \cite{Acosta-CL}) and $C(K)$ spaces (C.~Cardassi \cite{Cardassi-PAMS}). On the other hand, the first example of Banach space for which the set of numerical radius attaining operators is not dense was given by R.~Pay\'{a} in 1992 \cite{Paya}. Another counterexample was discovered shortly later by M.~Acosta, F.~Aguirre and R.~Pay\'{a} \cite{AAP}. The proofs in these two papers are tricky and non-trivial, and, in both examples, the operators shown that cannot be approximated by numerical radius attaining operators are not compact.

Our aim in this paper is to show that there are \textbf{compact} linear operators which cannot be approximated by numerical radius attaining operators. The analogous problem about compact operators which cannot be approximated by norm attaining operators has been recently solved by the second author of this manuscript \cite{Mar-JFA}. Actually, the proofs here mix some ideas from that paper with some ideas from the already mentioned counterexamples for the density of numerical radius attaining operators \cite{AAP,Paya}.

Let us comment that the counterexamples in \cite{AAP,Paya} follow similar lines and borrow some ideas from the seminal paper by Lindenstrauss \cite{Lindens}: they are of the form $Y\oplus_\infty Z$, where $Y^*$ is smooth enough, $Z$ fails to have extreme points in its unit ball in a strong way, and there are operators from $Z$ into $Y$ that cannot be approximated by numerical radius attaining operators (norm attaining operators in the counterexample of Lindenstrauss). Moreover, by construction, the operators shown there that cannot be approximated by numerical radius attaining operators are not compact, as they are constructed using operators from $Z$ into $Y$ which are not compact. Actually, the existence of non-compact operators from $Z$ to $Y$ is one of the building blocks of their proofs. Here this fact will be replaced by the use of Banach spaces without the approximation property.

Taking advantage of some recent ideas, it is now easy (up to a non-trivial old result by W.~Schachermayer) to present new examples of (non-compact) operators which cannot be approximated by numerical radius attaining operators. However, as we will show at the end of the paper, these new examples do not work for compact operators (see Example~\ref{example:L1+C-positive-compact}).

\begin{example}\label{example:L1+C-negative}
{\slshape There are bounded linear operators on the real spaces $X=C[0,1]\oplus_1 L_1[0,1]$ and $Y=C[0,1]\oplus_\infty L_1[0,1]$ which cannot be approximated by numerical radius attaining operators.}\newline Indeed, suppose for the sake of contradiction that $\NRA(X)$ is dense in $L(X)$. As $v(T)=\|T\|$ for every $T\in L(X)$ (\cite[Theorem~2.2]{D-Mc-P-W} and \cite[Proposition~1]{M-P}), it follows that norm-attaining operators from $X$ into $X$ are dense in $L(X)$. Then, we may use \cite[Proposition~2.9]{ACKLM} and \cite[Lemma~2]{PayaSaleh} to get that norm attaining operators from $L_1[0,1]$ into $C[0,1]$ are dense in $L(L_1[0,1],C[0,1])$, but this is not the case as shown by W.~Schachermayer \cite{Schachermayer-classical}. The proof for $Y$ is absolutely analogous.
\end{example}

The outline of the paper is as follows. We devote section~\ref{sect-example} to present the promised example of a compact linear operator which cannot be approximated by numerical radius attaining operators. As this example exists, it makes sense to study sufficient conditions on a Banach space $X$ to assure that $\NRA(X)\cap K(X)$ is dense in $K(X)$. In section~\ref{sect-positive} we review some of these conditions which can be easily deduced from previous results in the literature, and present some interesting new examples.

\section{The example}\label{sect-example}

Here is the main result of this paper.

\begin{theorem}\label{theorem}
There is a compact linear operator which cannot be approximated by numerical radius attaining operators.
\end{theorem}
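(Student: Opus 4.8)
The plan is to follow the strategy sketched in the introduction: build the counterexample in the form $Y \oplus_\infty Z$, where the factor $Y$ has a dual that is smooth enough (so that numerical radius behaves controllably, allowing us to reduce numerical-radius-attainment to norm-attainment of a corner operator), and where $Z$ has no extreme points in its unit ball in a strong quantitative sense (so that operators into $Y$ cannot be perturbed into numerical-radius-attaining ones). The key new ingredient — replacing the non-compactness used in \cite{AAP,Paya} — is to take $Z$ and $Y$ lacking the approximation property, specifically choosing a closed subspace $Z$ of some classical space (e.g. a subspace of $c_0$ or of a space with a suitably bad finite-dimensional decomposition) together with a compact operator $R \colon Z \to Y$ which cannot itself be approximated by finite-rank operators. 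One then defines $T \in K(Y \oplus_\infty Z)$ by $T(y,z) = (Rz, 0)$, which is compact because $R$ is.

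The first step is to set up the right pair of spaces $(Y,Z)$ and the compact operator $R \colon Z \to Y$: $Y$ should be (or have a renorming making it) such that $S_{Y^*}$ consists of smooth points or at least such that the numerical range of operators on $Y \oplus_\infty Z$ supported on the $Z \to Y$ corner is governed by $\{y^*(Rz)\}$ over the appropriate $(y,z)$; $Z$ should have the property that every point of $B_Z$ is a "deep" non-extreme point, e.g. $Z = c_0$-like or $Z$ a space where for each $z \in B_Z$ there are $z_1, z_2$ with $z = (z_1+z_2)/2$ and $\|z_1 - z_2\|$ bounded below. The second step is the numerical-radius-to-norm reduction: using the block structure of $Y \oplus_\infty Z$ and the description of $\Pi(Y\oplus_\infty Z)$, show that if $T$ above were approximable by numerical-radius attaining operators, then $R$ (or a nearby compact operator $Z \to Y$) would be approximable in operator norm by norm-attaining compact operators. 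The third step is to derive a contradiction: show that $R$ admits no such approximation, using that norm-attaining compact operators from $Z$ into $Y$ would have to factor through extreme-point behaviour $Z$ lacks, while simultaneously the approximation-property failure prevents replacing $R$ by a finite-rank operator for which attainment would be automatic — this is exactly the mechanism of \cite{Mar-JFA}.

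The main obstacle I expect is the numerical-radius-to-norm reduction (the second step). In the norm-attaining setting of \cite{Mar-JFA} one works with a single operator norm and the corner structure of $\oplus_\infty$ sums is clean; here the numerical range mixes the two coordinates of $\Pi(Y\oplus_\infty Z)$, and controlling $V(T)$ (and the numerical radius of small perturbations of $T$) requires carefully choosing $Y$ so that its part of the numerical range is "rigid" — typically one needs $Y^*$ smooth and $Y$ chosen so that $v(\cdot)$ restricted to the relevant operators equals the corner norm up to controlled error. Getting the quantitative estimates to line up — so that an $\eps$-approximation of $T$ by a numerical-radius-attaining operator yields an $O(\eps)$-approximation of $R$ by a norm-attaining compact operator $Z \to Y$ — is the delicate heart of the argument, and it is where the ideas of \cite{AAP,Paya} (smoothness of $Y^*$, absence of extreme points in $B_Z$) must be combined with the approximation-property obstruction to preclude the finite-rank escape route.
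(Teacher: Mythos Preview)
Your plan is correct and matches the paper's approach: the paper takes $Y$ to be a quotient of $\ell_p$ ($1<p<2$) without the approximation property (so $Y^*\subset\ell_q$ with $q>2$ has a $C^2$-smooth norm), $Z$ a closed subspace of $c_0$ (hence \emph{strongly flat}), a compact $S\colon Z\to Y$ not approximable by finite-rank operators, and sets $T(y+z)=Sz$. The one refinement to your step~2 is that the reduction is not to \emph{norm-attaining} approximants of $S$ but directly to \emph{finite-rank} ones: the paper shows that whenever an operator of the form $(y,z)\mapsto Ay+Bz$ lies in $\NRA(Y\oplus_\infty Z)$, the $C^2$-smoothness of $\|\cdot\|_{Y^*}$ yields a second-order vanishing condition on $t\mapsto\|B^*y_0^*+tB^*h^*\|$ at the maximizing $y_0^*$, and this together with the strong flatness of $Z$ forces $B$ itself to have finite rank --- so the corner pieces $B_n$ of any NRA approximants $T_n$ are finite-rank, contradicting the choice of $S$.
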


To state our example properly, we need to recall the definition and some basic results about the approximation property. We refer to \cite{Jarchow,Linden-Tz,LTII} for background. A Banach space $X$ has the \emph{approximation property} if for every compact set $K$ and every $\varepsilon>0$, there is $R\in L(X)$ of finite-rank such that $\|x-R(x)\| <\varepsilon$ for all $x\in K$. It was shown by P.~Enflo in 1973 that there are Banach spaces failing the approximation property. Actually, there are closed subspaces of $c_0$ and $\ell_p$ ($p\neq 2$) without the approximation property \cite[Theorem~2.d.6]{Linden-Tz}, \cite[Theorem~1.g.4]{LTII}, and so there are quotients of $\ell_p$ without the approximation property (use \cite[Corollary~18.3.5]{Jarchow}).

Our theorem will be proved if we state the validity of the following family of examples.

\begin{example}\label{example-main-negative}
{\slshape Given $1<p<2$ and a quotient $Y$ of $\ell_p$ without the approximation property, there exists a closed subspace $Z$ of $c_0$ such that $K(Y\oplus_\infty Z)$ is not contained in the closure of $\NRA(Y\oplus_\infty Z)$.}
\end{example}

As we already mentioned, the idea of considering spaces of the form $Y\oplus_\infty Z$, where $Y^*$ is smooth enough and $Z$ fails to have extreme points, to produce counterexamples for numerical radius attaining operators goes back to R.~Pay\'{a} \cite{Paya} and to M.~Acosta, F.~Aguirre and R.~Pay\'{a} \cite{AAP}. The proof of our result actually is an extension of the one given in \cite[\S2]{AAP}, but we necessarily have to change the space $Y$ involved, as Hilbert spaces have the approximation property \cite[Proposition~18.5.4]{Jarchow}, and so do their closed subspaces. This makes life a little more complicated for us.

We divide the proof of the main result into two lemmata and one proposition for the sake of clearness.

The first result deals with the smoothness of the space $Y^*$.
We recall briefly some facts about differentiability of functions on Banach spaces which will be useful to our discussion. We refer the reader to \cite{D-G-Z} for the details. Given Banach spaces $X$, $Y$ and a function $f: X\longrightarrow Y$ which is G\^ateaux (Fr\'echet) differentiable, we write $D_f(x)\in L(X,Y)$ for the differential of $f$ at $x\in X$. We say that the norm $\|\cdot\|$ of $X$ is smooth if it is G\^ateaux differentiable at every $x\in X\setminus \{0\}$. The normalized duality mapping $J_X: X \longrightarrow 2^{X^*}$ of $X$ is given by
$$
J(x)=\{x^*\in X^* \ : \ x^*(x)=\|x^*\|^2=\|x\|^2\} \qquad (x\in X).
$$
If the norm of $X$ is smooth, this mapping is single-valued and the map $\widetilde{J}_X: X\setminus\{0\} \longrightarrow S_{X^*}$ given by
$$
\widetilde{J}_X(x)=J\left(\frac{x}{\|x\|}\right)=\frac{J(x)}{\|J(x)\|} \qquad (x\in X\setminus\{0\})
$$
is well defined. Let us observe that $\widetilde{J}_X(x)$ can be alternatively defined as the unique $x^*\in S_{X^*}$ such that $x^*(x)=\|x\|$. If, moreover, we assume that the norm of $X$ is $C^2$-smooth on $X\setminus \{0\}$, then $\widetilde{J}_X$ is Fr\'echet differentiable on $X\setminus \{0\}$.

\begin{lemma}\label{lemma_on_Y}
Let $Y$ be a Banach space such that the norm of $Y^*$ is $C^2$-smooth on $Y^*\setminus\{0\}$ and let $Z$ be a Banach space. Suppose that $A\in L(Y)$, $B\in L(Z, Y)$, and $(y_0,y_0^*)\in \Pi(Y)$ satisfy that
\begin{equation*}
|y^*(Ay)|+\|B^*y^*\|\leq |y_0^*(Ay_0)|+\|B^*y_0^*\|
\end{equation*}
for all $(y,y^*)\in \Pi(Y)$. Then,
\begin{equation*}
\lim_{t\to 0}\frac{\|B^*y_0^*+tB^*h^*\|+\|B^*y_0^*-tB^*h^*\|-2\|B^*y_0^*\|}{t} = 0
\end{equation*}
for every $h^*\in S_{Y^*}$.
\end{lemma}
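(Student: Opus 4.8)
The plan is to exploit that $(y_0,y_0^*)$ is a maximizer of the functional $\varphi(y,y^*)=|y^*(Ay)|+\|B^*y^*\|$ on $\Pi(Y)$, and to differentiate along a cleverly chosen curve in $\Pi(Y)$ through $(y_0,y_0^*)$. The natural curve is obtained by perturbing $y_0^*$ in the direction $h^*$ and then correcting it to stay in the dual sphere, namely $y^*(t)=\widetilde{J}_{Y^*}^{-1}$-type construction; more precisely, set $g^*(t)=\dfrac{y_0^*+t h^*}{\|y_0^*+t h^*\|}\in S_{Y^*}$. Since the norm of $Y^*$ is $C^2$-smooth, we have a well-defined Fréchet-differentiable map $\widetilde{J}_{Y^*}:Y^*\setminus\{0\}\to S_{Y^{**}}$, and because $Y$ is (a quotient of $\ell_p$, hence) reflexive we may view $\widetilde{J}_{Y^*}(g^*(t))\in S_Y$. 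Then $\bigl(\widetilde{J}_{Y^*}(g^*(t)),\,g^*(t)\bigr)\in\Pi(Y)$ for all small $t$, and at $t=0$ it equals $(y_0,y_0^*)$ because $\widetilde{J}_{Y^*}(y_0^*)=y_0$ (here one uses that $y_0^*(y_0)=1=\|y_0\|=\|y_0^*\|$ together with smoothness of $Y^*$ to identify $y_0$ as the unique norming functional). By the $C^2$-smoothness, $t\mapsto \widetilde{J}_{Y^*}(g^*(t))$ and $t\mapsto g^*(t)$ are differentiable at $0$.

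Next I would plug this curve into the maximality inequality. Write $u(t)=\widetilde{J}_{Y^*}(g^*(t))$ and consider $\psi(t)=\varphi\bigl(u(t),g^*(t)\bigr)\le \varphi(y_0,y_0^*)=\psi(0)$, so $\psi$ has a local maximum at $0$. The term $t\mapsto |g^*(t)(Au(t))|$ is differentiable at $0$ (the composition of differentiable maps, and near $t=0$ the quantity $g^*(t)(Au(t))$ stays close to $y_0^*(Ay_0)=\pm 1\ne 0$ so the absolute value is smooth there). Hence $t\mapsto \|B^*g^*(t)\|$, being the difference of two functions one of which is differentiable at $0$ and whose sum has a maximum at $0$, must be both upper- and lower-differentiable at $0$ with matching one-sided behaviour; a short convexity/maximality argument forces its one-sided derivatives at $0$ to exist and sum to zero, i.e. $t\mapsto\|B^*g^*(t)\|$ is differentiable at $t=0$.

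Finally I would pass from differentiability of $t\mapsto\|B^*g^*(t)\|$ to the symmetric second-difference statement in the lemma. Since $g^*(t)=\dfrac{y_0^*+t h^*}{\|y_0^*+t h^*\|}$ and $\|y_0^*+t h^*\|=1+O(t^2)$ by smoothness of $Y^*$ at $y_0^*$ (the first-order term vanishes after symmetrizing, or one uses $C^2$-smoothness directly), we have $B^*g^*(t)=B^*y_0^*+t\,B^*h^* + O(t^2)$, uniformly in the relevant sense; substituting $t$ and $-t$ and adding,
\begin{equation*}
\|B^*y_0^*+tB^*h^*\|+\|B^*y_0^*-tB^*h^*\|-2\|B^*y_0^*\| = \|B^*g^*(t)\|+\|B^*g^*(-t)\|-2\|B^*y_0^*\| + O(t^2),
\end{equation*}
and the right-hand side, divided by $t$, tends to $0$ because $\|B^*g^*(t)\|+\|B^*g^*(-t)\|-2\|B^*g^*(0)\|=o(t)$ by differentiability at $0$, while $O(t^2)/t\to 0$.

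The main obstacle I anticipate is the bookkeeping at the point where the absolute value and the (a priori only sublinear, convex) norm function $\|B^*\cdot\|$ are combined: one must argue carefully that maximality of the sum of a differentiable function and a convex function forces the convex summand to be differentiable at that point, and then control the discrepancy between the straight-line perturbation $y_0^*+th^*$ and its normalization $g^*(t)$ at order $t^2$ so that it does not contaminate the first-order term. Both are routine given $C^2$-smoothness of the norm of $Y^*$, but they are the technical heart of the argument.
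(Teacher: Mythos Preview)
Your strategy coincides with the paper's: perturb $y_0^*$ along the line $y_0^*+th^*$, pair it with $\widetilde J_{Y^*}$ to remain on $\Pi(Y)$, exploit that the $|y^*(Ay)|$-term is differentiable while the $\|B^*y^*\|$-term is governed by convexity, and symmetrize in $\pm h^*$. The paper's execution is slightly more direct: it works with the \emph{un-normalized} $y_t^*=y_0^*+th^*$ throughout, so that $F_2(t)=\|B^*y_t^*\|$ is genuinely convex in $t$; applying the hypothesis to $y_t^*/\|y_t^*\|$ and using homogeneity gives $F_1(t)+F_2(t)\le\|y_t^*\|\bigl(F_1(0)+F_2(0)\bigr)$, and then one simply adds the $h^*$ and $-h^*$ inequalities after passing to right-derivatives, without any back-and-forth between $g^*(t)$ and $y_0^*+th^*$.

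Two concrete gaps in your write-up. First, the assertion ``$y_0^*(Ay_0)=\pm1\ne0$'' is unjustified and need not hold; when $y_0^*(Ay_0)=0$ the absolute value is not differentiable and your first summand is not smooth. The paper treats this case separately (one drops $F_1$ from the estimate and the argument is even simpler). Second, your expansion $B^*g^*(t)=B^*y_0^*+tB^*h^*+O(t^2)$ is incorrect: since $\|y_0^*+th^*\|=1+ct+O(t^2)$ with $c=D_{\|\cdot\|_{Y^*}}(y_0^*)(h^*)$ generally nonzero, one has $g^*(t)=y_0^*+t(h^*-cy_0^*)+O(t^2)$, so the displayed identity that follows is false as written. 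The clean fix is not to expand at all but to use the exact homogeneity $\|B^*(y_0^*+th^*)\|=\|y_0^*+th^*\|\cdot\|B^*g^*(t)\|$; with this, the symmetrized quantity does collapse to $o(t)$ once $t\mapsto\|B^*g^*(t)\|$ is differentiable at $0$, and your ``convexity/maximality'' claim for that differentiability is in fact correct because $\|B^*g^*(t)\|$ is a $C^2$ multiple of a convex function of $t$. A minor point: reflexivity of $Y$ follows from the $C^2$-smoothness hypothesis on $\|\cdot\|_{Y^*}$ itself (as the paper notes, citing \cite{D-G-Z}), not from $Y$ being a quotient of $\ell_p$; the lemma is stated in that generality.
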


\begin{proof}
Observe first that the assumption on $Y$ implies reflexivity \cite[Proposition~II.3.4]{D-G-Z}. Therefore, we may and do identify $Y^{**}$ with $Y$ and consider the normalized duality mapping $\widetilde{J}_{Y^*}: Y^*\setminus\{0\}\longrightarrow S_Y$ and observe that it is Fr\'echet differentiable by the hypothesis on $Y$. Hence, the function $F:Y^*\setminus\{0\}\longrightarrow \R$ given by
$$
F(y^*)=\bigl|y^*\bigl[A\bigl(\widetilde{J}_{Y^*}(y^*)\bigr)\bigr]\bigr| \qquad \big(y^*\in Y^*\setminus\{0\}\big)
$$
is Fr\'echet differentiable at every $y^*\in Y^*\setminus\{0\}$ for which $F(y^*)\neq 0$. Next, we fix $h^*\in S_{Y^*}$ and for $0\leq t<1$ we define:
$$
y_t^*=y_0^*+th^*, \qquad \phi(t)=\|y_t^*\|, \qquad F_1(t)=F(y_t^*), \qquad \text{and} \qquad F_2(t)=\|B^*y_t^*\|.
$$
On the one hand, $F_2$ is right-differentiable at the origin as it is a convex function. On the other hand, if we assume that $0\neq |y_0^*(Ay_0)|=F(y_0^*)=F_1(0)$ (observe that $y_0=\widetilde{J}_{Y^*}(y_0^*)$ by smoothness), we get that $F_1$ is differentiable at the origin.

Now, by using the inequality in the hypothesis for
$$
y^*=\phi(t)^{-1}y_t^* \qquad \text{ and } \qquad  y=\widetilde{J}_{Y^*}(\phi(t)^{-1}y_t^*)= \widetilde{J}_{Y^*}(y_t^*),
$$
we obtain that
\begin{equation}\label{eq:thm-AAP-desigualdad-H1-H2}
F_1(t)+F_2(t)\leq \phi(t)[F_1(0)+F_2(0)] \qquad (0\leq t<1),
\end{equation}
which gives
$$
\frac{F_1(t)- F_1(0)}{t}+\frac{F_2(t)-F_2(0)}{t}\leq \frac{\phi(t)-1}{t}[F_1(0)+F_2(0)] \qquad (0<t<1).
$$
Taking right-limits with $t\rightarrow 0$, we obtain
$$
F'_1(0)+ \partial_+ F_2(0)\leq \phi'(0)[F_1(0)+F_2(0)]
$$
(where $\partial_+F_2(0)$ is the right-derivative of $F_2$ at 0) or, equivalently,
$$
D_F(y_0^*)(h^*)+\lim_{t\to 0^+}\frac{\|B^*y_0^*+tB^*h^*\|-\|B^*y_0^*\|}{t}\leq D_{\|\cdot\|_{Y^*}}(y_0^*)(h^*)\bigl[F(y^*_0) + \|B^*(y_0^*)\|\bigr].
$$
If we repeat the above argument for $-h^*$, we get the analogous inequality
$$
D_F(y_0^*)(-h^*)+\lim_{t\to 0^+}\frac{\|B^*y_0^*-tB^*h^*\|-\|B^*y_0^*\|}{t}\leq D_{\|\cdot\|_{Y^*}}(y_0^*)(-h^*)\bigl[F(y^*_0) + \|B^*(y_0^*)\|\bigr].
$$
Adding the above two equations, taking into account that both $F$ and the norm of $Y^*$ are Fr\'{e}chet differentiable, we obtain
\begin{equation}\label{eq:thm-AAP-key-limit-with+}
\lim_{t\to 0^+}\frac{\|B^*y_0^*+tB^*h^*\|+\|B^*y_0^*-tB^*h^*\|-2\|B^*y_0^*\|}{t} \leq 0
\end{equation}
as desired. We recall that we required that $|y_0^*(Ay_0)|\neq0$ to use the differentiability of $F_1$. If, otherwise, we have $|y_0^*(Ay_0)|=0$, observe that inequality \eqref{eq:thm-AAP-desigualdad-H1-H2} implies
$$
F_2(t)\leq \phi(t)F_2(0),
$$
and we can repeat the arguments above without the use of $F_1$.

Next, observe that the function $\dfrac{\|B^*y_0^*+tB^*h^*\|+\|B^*y_0^*-tB^*h^*\|-2\|B^*y_0^*\|}{t}$ is non-negative for every $t>0$ by the convexity of the norm, and so the limit in \eqref{eq:thm-AAP-key-limit-with+} is actually equal to zero. Finally, as changing $t$ by $-t$ in this limit just changes the sign of the function and the limit is zero, we may replace right-limit by regular limit, getting the statement of the proposition.
\end{proof}

We next deal with the needed condition on the space $Z$, which is a strong way of failing to have extreme points. Let $Z$ be a Banach space. We say that $Z$ is \emph{flat} at a point $z_0\in S_Z$ in the direction $z\in Z$ if $\|z_0\pm z\|\leq 1$ and we write
$$
\mathrm{Flat}(z_0)=\{z\in Z\,:\,\|z_0\pm z\|\leq 1\}
$$
to denote the \emph{set of directions of flatness} at $z_0$.
Observe that $z_0$ is an extreme point of $B_Z$ if and only if $Z$ is not flat at $z_0$ in any direction (i.e.\ $\mathrm{Flat}(z_0)=\{0\}$). We say that $B_Z$ is \emph{strongly flat} if for every $z_0\in S_Z$, the closed linear span of $\mathrm{Flat}(z_0)$ has finite-codimension. Easy examples of spaces of this kind are $c_0$ and all its closed infinite-dimensional subspaces (see \cite[Lemma~2.2]{Mar-Racsam}). This definition is stronger than the one of uniformly asymptotically flat \cite[\S~V]{Godefroy} and, hence, it implies asymptotic uniform smoothness \cite{JohLinPreSch2002}.

\begin{lemma}\label{lemma-on-Z}
Let $Z$ be a strongly flat Banach space and let $Y$ be a Banach space.
Suppose that for $B\in L(Z,Y)$ there is $y_0^*\in S_{Y^*}$ such that
\begin{equation*}
\lim_{t\to 0^+}\frac{\|B^*y_0^*+tB^*h^*\|+\|B^*y_0^*-tB^*h^*\|-2\|B^*y_0^*\|}{t} \leq 0
\end{equation*}
for every $h^*\in S_{Y^*}$ and that $B^*y_0^*$ attains its norm on $Z$. Then, $B$ has finite-rank.
\end{lemma}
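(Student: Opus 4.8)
The plan is to exploit the flatness of $Z$ together with the limit hypothesis to show that $B^*y_0^*$ vanishes on a finite-codimensional subspace of $Z$, and then bootstrap this from a single functional $y_0^*$ to all of $Y^*$, concluding that $B^*$ has finite-rank image, hence so does $B$. Here is how I would organize the argument. Let $z_0 \in S_Z$ be a point at which $\|B^*y_0^*\| = (B^*y_0^*)(z_0)$, which exists by hypothesis. Fix any $h^* \in S_{Y^*}$ and any direction of flatness $z \in \mathrm{Flat}(z_0)$. Then $\|z_0 \pm tz\| \le 1$ for all $|t| \le 1$ (since $\mathrm{Flat}(z_0)$ is absolutely convex), so evaluating the functionals $B^*y_0^* \pm tB^*h^*$ at the points $z_0 \mp tz$ and using $(B^*y_0^*)(z_0) = \|B^*y_0^*\|$ gives, for $t > 0$ small,
\begin{align*}
\|B^*y_0^* + tB^*h^*\| + \|B^*y_0^* - tB^*h^*\|
&\ge (B^*y_0^* + tB^*h^*)(z_0 - tz) + (B^*y_0^* - tB^*h^*)(z_0 + tz) \\
&= 2\|B^*y_0^*\| - 2t^2 (B^*h^*)(z).
\end{align*}
Dividing by $t$ and letting $t \to 0^+$, the right-hand side tends to $2\|B^*y_0^*\|$, so combined with the limit hypothesis (which forces the left-hand quotient to have limit $\le 2\|B^*y_0^*\|/t \to$ the relevant value) one extracts, after the standard rearrangement, that the first-order term vanishes: more precisely, the inequality shows $\liminf_{t\to 0^+}\bigl(\|B^*y_0^* + tB^*h^*\| + \|B^*y_0^* - tB^*h^*\| - 2\|B^*y_0^*\|\bigr)/t \ge 0$ always, while the hypothesis gives $\le 0$, so the limit is $0$ for every $h^*$; to use flatness I instead keep the $t^2$ term and compare with a sharper lower bound, concluding that $(B^*h^*)(z)$ must be controlled — the clean way is to note that since $\mathrm{Flat}(z_0)$ is a cone closed under scaling, if $(B^*h^*)(z) \ne 0$ for some $z \in \mathrm{Flat}(z_0)$ we may replace $z$ by $\lambda z$ with $\lambda$ large and $|t|$ correspondingly small, driving $(B^*y_0^* \pm tB^*h^*)(z_0 \mp tz)$ to produce a contradiction with the hypothesized limit being $\le 0$.

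Granting that, the conclusion is that $B^*h^*$ annihilates $\mathrm{Flat}(z_0)$ for every $h^* \in S_{Y^*}$, hence annihilates $W := \overline{\mathrm{span}}\,\mathrm{Flat}(z_0)$, which by strong flatness has finite codimension in $Z$. Therefore $B^*h^* \in W^\perp$ for all $h^* \in Y^*$, i.e. $B^*(Y^*) \subseteq W^\perp$, and $W^\perp$ is finite-dimensional (its dimension equals the codimension of $W$). Thus $B^*$ has finite-rank, and since $\mathrm{rank}(B) = \mathrm{rank}(B^*)$, the operator $B$ has finite-rank.

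I expect the main obstacle to be the quantitative bookkeeping in the first paragraph: one must be careful that the hypothesized limit is only assumed for $h^* \in S_{Y^*}$ and only as a right-limit, so to run the scaling trick on $z$ one should rescale $h^*$ back to the sphere and adjust $t$ simultaneously, keeping track of which quantity is $o(t)$ and which is $O(t^2)$. The conceptual point, though, is robust: flatness of $Z$ at $z_0$ in direction $z$ means the pair $(z_0, \text{dual functional at } z_0)$ behaves like an interior segment, so any functional attaining its norm at $z_0$ cannot "see" the flat directions to first order, and the hypothesized one-sided second-difference condition upgrades this to the vanishing of $B^*y^*$ — and then of all $B^*h^*$ — on the whole flat subspace. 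The finite-codimensionality built into the definition of strong flatness is exactly what converts "vanishes on the flat subspace" into "finite-rank."
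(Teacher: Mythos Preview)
Your overall architecture matches the paper's: show that $Bz=0$ (equivalently $(B^*h^*)(z)=0$ for every $h^*\in S_{Y^*}$) whenever $z\in\mathrm{Flat}(z_0)$, and then invoke strong flatness to conclude finite rank. The gap is precisely where you feared, in the first-order estimate. Your evaluation at $z_0\mp tz$ produces only the $O(t^2)$ term $-2t^2(B^*h^*)(z)$, which after dividing by $t$ vanishes in the limit and yields nothing. Your proposed rescue, ``replace $z$ by $\lambda z$ with $\lambda$ large'', fails outright: $\mathrm{Flat}(z_0)=\{z:\|z_0\pm z\|\le 1\}$ is a bounded absolutely convex set (indeed $\|z\|\le 1$ for every $z$ in it), not a cone, so large multiples $\lambda z$ are no longer directions of flatness and you lose the bound $\|z_0\pm\lambda z\|\le 1$ that the whole estimate rests on.

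The fix is to evaluate at $z_0\pm z$ rather than $z_0\pm tz$, pairing the signs correctly. Writing $z_0^*=B^*y_0^*$ and choosing a modulus-one scalar $\theta$ so that $z^*:=\theta B^*h^*$ satisfies $\re z^*(z)=|z^*(z)|$, one has, since $\|z_0\pm z\|\le 1$,
\[
\|z_0^*+tz^*\|+\|z_0^*-tz^*\|\ \ge\ \re\bigl([z_0^*+tz^*](z_0+z)+[z_0^*-tz^*](z_0-z)\bigr)=2\|z_0^*\|+2t\,|z^*(z)|.
\]
Now the hypothesis gives, for any $\eps>0$ and all small $t>0$, that the left side is $<2\|z_0^*\|+t\eps$, whence $|z^*(z)|<\eps/2$ and so $h^*(Bz)=0$. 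This is exactly the paper's argument; note that the unimodular rotation $\theta$ is what guarantees the first-order term has the right sign (your version implicitly assumed real scalars and a favourable sign).
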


\begin{proof}
Write $z_0^*=B^*y_0^*$. As $z_0^*$ attains its norm, we may take $z_0\in S_Z$ such that $\re z_0^*(z_0)=\|z_0^*\|$. We claim that $Bz=0$ for every $z\in \mathrm{Flat}(z_0)$, and this finishes the proof by the hypothesis on $Z$. Therefore, let us prove the claim. Fixed $z\in \mathrm{Flat}(z_0)$, for each $h^*\in S_{Y^*}$, we write
$z^*=\theta B^*h^*$, where $\theta$ is a modulus-one scalar satisfying that $\re z^*(z)=|z^*(z)|$. Next, given $\eps>0$, we use the inequality in the hypothesis to find $r>0$ such that
\begin{equation*}
\|z_0^*+tz^*\|+\|z_0^*-tz^*\|<2\|z_0^*\|+t\eps
\end{equation*}
for every $t\in (0,r)$. Now, as $\|z_0\pm z\|\leq 1$, we get that
\begin{align*}
2\|z_0^*\|+t\eps&>\|z_0^*+tz^*\|+\|z_0-tz^*\|\\
&\geq \re\Big([z_0^*+tz^*](z_0+z)+[z_0^*-tz^*](z_0-z)\Big)\\
&=2\|z_0^*\|+2t\re z^*(z)=2\|z_0^*\|+2t|z^*(z)|.
\end{align*}
This gives that $2|z^*(z)|<\eps$, and the arbitrariness of $\eps$ implies that
$$
0=|z^*(z)|=\bigl|[B^*h^*](z)\bigr|=|h^*(Bz)|.
$$
Since this is true for every $h^*\in S_{Y^*}$, we get that $Bz=0$, as claimed.
\end{proof}

Finally, numerical radius attainment appears in our last preliminary result. We will make use of the following lemma from \cite{Paya}, which we state for the convenience of the reader.

\begin{lemma}\cite[Lemma~1.2]{Paya}\label{lem:Rafa}
Let $Y$, $Z$ be Banach spaces, $X=Y\oplus_\infty Z$ and $P_Y$, $P_Z$ the projections from $X$ onto $Y$ and $Z$, respectively. For $T\in L(X)$, we have
\begin{enumerate}
\item $v(T)=\max\{v(P_YT), v(P_ZT)\}$.
\item If $T\in \NRA(X)$ and $v(P_YT)>v(P_ZT)$, then $P_YT\in \NRA(X)$.
\item $v(P_YT)=\sup\{|y^*(P_YT(y+z))| \, : \, (y,y^*)\in \Pi(Y),\, z\in B_Z \}$ and $P_YT\in \NRA(X)$ if and only if this supremum is attained.
\end{enumerate}
\end{lemma}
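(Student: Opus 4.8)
The plan is to deduce all three items directly from the explicit descriptions of the unit ball of $X=Y\oplus_\infty Z$, of its dual $X^*=Y^*\oplus_1 Z^*$, and of the set $\Pi(X)$. The main preliminary step is a description of $\Pi(X)$: writing $\lambda=\|y^*\|$ and $\mu=\|z^*\|$, a pair $\bigl((y,z),(y^*,z^*)\bigr)$ lies in $\Pi(X)$ if and only if $\lambda+\mu=1$ and one of the following holds: \textbf{(a)}~$\lambda\in(0,1)$, $\|y\|=\|z\|=1$, $(y,\lambda^{-1}y^*)\in\Pi(Y)$ and $(z,\mu^{-1}z^*)\in\Pi(Z)$; \textbf{(b)}~$\lambda=1$, $z^*=0$, $(y,y^*)\in\Pi(Y)$ and $z\in B_Z$; or \textbf{(c)}~$\lambda=0$, $y^*=0$, $y\in B_Y$ and $(z,z^*)\in\Pi(Z)$. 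This is obtained by forcing every inequality in $1=y^*(y)+z^*(z)\le\|y^*\|\,\|y\|+\|z^*\|\,\|z\|\le\lambda+\mu=1$ to be an equality; in particular $\|y\|=1$ whenever $\lambda\neq0$ and $\|z\|=1$ whenever $\mu\neq0$. Since $P_YT(y,z)$ lies in the canonical copy of $Y$ inside $X$, the functional $(y^*,z^*)$ pairs with it only through its first coordinate, so $V(P_YT)=\bigl\{y^*\bigl(P_YT(y,z)\bigr):\bigl((y,z),(y^*,z^*)\bigr)\in\Pi(X)\bigr\}$, and similarly for $P_ZT$.

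With this in hand I would first prove item~(3), which is the computational heart of the lemma; write $\rho$ for the supremum on its right-hand side. For $v(P_YT)\le\rho$, take $\bigl((y,z),(y^*,z^*)\bigr)\in\Pi(X)$ and split according to (a)--(c): in case (a), $\bigl|y^*(P_YT(y,z))\bigr|=\lambda\,\bigl|\lambda^{-1}y^*(P_YT(y,z))\bigr|\le\lambda\rho\le\rho$ because $(y,\lambda^{-1}y^*)\in\Pi(Y)$ and $z\in B_Z$, case (b) is immediate, and case (c) contributes $0$. For $\rho\le v(P_YT)$, given $(y,y^*)\in\Pi(Y)$ and $z\in B_Z$ the pair $\bigl((y,z),(y^*,0)\bigr)$ lies in $\Pi(X)$ by (b) and exhibits $y^*\bigl(P_YT(y,z)\bigr)$ as an element of $V(P_YT)$. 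The attainment assertion falls out of the same bookkeeping: if $\rho$ is attained at $(y,y^*)$, $z$, then $\bigl((y,z),(y^*,0)\bigr)$ witnesses $v(P_YT)=\rho$; conversely, if $P_YT\in\NRA(X)$, pick $\bigl((y,z),(y^*,z^*)\bigr)\in\Pi(X)$ with $\bigl|y^*(P_YT(y,z))\bigr|=v(P_YT)$, and --- setting aside the trivial case $v(P_YT)=0$ --- the chain $v(P_YT)=\lambda\,\bigl|\lambda^{-1}y^*(P_YT(y,z))\bigr|\le\lambda\,v(P_YT)$ forces $\lambda=1$, i.e.\ case (b), so $\rho$ is attained.

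Items~(1) and~(2) then follow readily. For ``$\le$'' in~(1), every $\bigl((y,z),(y^*,z^*)\bigr)\in\Pi(X)$ satisfies, by splitting into (a)--(c) and invoking the supremum formula in~(3) together with its analogue for $P_ZT$, the estimate $\bigl|y^*(P_YT(y,z))+z^*(P_ZT(y,z))\bigr|\le\lambda\,v(P_YT)+\mu\,v(P_ZT)\le\max\{v(P_YT),v(P_ZT)\}$; for ``$\ge$'', feeding $\bigl((y,z),(y^*,0)\bigr)\in\Pi(X)$ into $V(T)$ yields $v(P_YT)\le v(T)$, and symmetrically $v(P_ZT)\le v(T)$. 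For~(2), if $T\in\NRA(X)$ and $v(P_YT)>v(P_ZT)$, choose $\bigl((y,z),(y^*,z^*)\bigr)\in\Pi(X)$ attaining $v(T)=v(P_YT)$; the estimate just displayed gives $\lambda\,v(P_YT)+\mu\,v(P_ZT)\ge v(P_YT)$, which is incompatible with $\lambda<1$ (as $\mu=1-\lambda$), so $\lambda=1$, $z^*=0$, and $\bigl((y,z),(y^*,0)\bigr)$ certifies $P_YT\in\NRA(X)$. I expect the only delicate point to be the careful handling of the degenerate cases $\lambda\in\{0,1\}$ and of the trivial case $v(P_YT)=0$ throughout the attainment arguments; in particular, it is exactly the equality $\|y\|=1$ from the chain in the first paragraph --- not merely $\|y\|\le1$ --- that lets us land genuinely in $\Pi(Y)$ when restricting to case (b).
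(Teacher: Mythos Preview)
Your argument is correct: the explicit description of $\Pi(Y\oplus_\infty Z)$ via the three cases $\lambda\in(0,1)$, $\lambda=1$, $\lambda=0$ is accurate, and the deductions of items (3), (1), (2) from it are sound, including the handling of the degenerate cases you flag at the end. Note, however, that the paper does not actually prove this lemma; it merely quotes it from \cite[Lemma~1.2]{Paya} for the reader's convenience, so there is no in-paper proof to compare your approach against. Your direct verification is exactly the natural one and matches what one finds in Pay\'a's original paper.
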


Here is the last preliminary result in the way to prove Theorem~\ref{theorem}, where we glue our two lemmata with the fact that an operator of a particular form attains its numerical radius.

\begin{proposition}\label{all-together}
Let $Y$ be a Banach space such that the norm of $Y^*$ is $C^2$-smooth on $Y^*\setminus\{0\}$ and let $Z$ be a strongly flat Banach space. Consider $X=Y\oplus_\infty Z$ and for $A\in L(Y)$ and $B\in L(Z, Y)$, define $T\in L(X)$ by
$$
T(y+z)=A(y)+B(z) \qquad  \bigl(y\in Y,\ z\in Z\bigr).
$$
If $T\in \NRA(X)$, then $B$ is of finite-rank.
\end{proposition}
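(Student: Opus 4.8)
The plan is to reduce the statement to a direct combination of Lemma~\ref{lemma_on_Y} and Lemma~\ref{lemma-on-Z}, using Lemma~\ref{lem:Rafa} to produce the point of $\Pi(Y)$ at which the first of these lemmata applies.

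First I would observe that, by its very definition, $T$ maps $X$ into $Y$: indeed $T(y+z)=Ay+Bz\in Y$ for all $y\in Y$ and $z\in Z$. Hence $P_YT=T$ and $P_ZT=0$, so Lemma~\ref{lem:Rafa}(1) gives $v(T)=v(P_YT)$ and Lemma~\ref{lem:Rafa}(3) gives
$$
v(T)=\sup\bigl\{|y^*(Ay)+(B^*y^*)(z)|\,:\,(y,y^*)\in\Pi(Y),\ z\in B_Z\bigr\}.
$$
For a fixed $(y,y^*)\in\Pi(Y)$, maximizing over $z\in B_Z$ detaches the term $\|B^*y^*\|$: given $\eps>0$, pick $z\in B_Z$ with $|(B^*y^*)(z)|>\|B^*y^*\|-\eps$ and replace $z$ by $\theta z$ for a suitable modulus-one scalar $\theta$ so that $y^*(Ay)$ and $(B^*y^*)(\theta z)$ have the same argument; then $|y^*(Ay)+(B^*y^*)(\theta z)|=|y^*(Ay)|+|(B^*y^*)(z)|>|y^*(Ay)|+\|B^*y^*\|-\eps$, while the reverse inequality is the triangle inequality. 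Therefore
$$
v(T)=\sup\bigl\{|y^*(Ay)|+\|B^*y^*\|\,:\,(y,y^*)\in\Pi(Y)\bigr\}.
$$

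Next I would split into two cases. If $v(T)=0$, then $\|B^*y^*\|=0$ for every $(y,y^*)\in\Pi(Y)$; since the $C^2$-smoothness hypothesis on $Y^*$ forces $Y$ to be reflexive (as recalled in the proof of Lemma~\ref{lemma_on_Y}), every $y^*\in S_{Y^*}$ belongs to some pair of $\Pi(Y)$, hence $B^*=0$ and $B=0$ is trivially of finite-rank. If $v(T)>0$, then $v(P_YT)=v(T)>0=v(P_ZT)$, so Lemma~\ref{lem:Rafa}(2) gives $P_YT\in\NRA(X)$, and then Lemma~\ref{lem:Rafa}(3) tells us that the supremum defining $v(P_YT)$ is attained: there are $(y_0,y_0^*)\in\Pi(Y)$ and $z_0\in B_Z$ with $|y_0^*(Ay_0)+(B^*y_0^*)(z_0)|=v(T)$. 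Reading off the forced equalities in
$$
v(T)=|y_0^*(Ay_0)+(B^*y_0^*)(z_0)|\le|y_0^*(Ay_0)|+|(B^*y_0^*)(z_0)|\le|y_0^*(Ay_0)|+\|B^*y_0^*\|\le v(T),
$$
I get, on the one hand, $|(B^*y_0^*)(z_0)|=\|B^*y_0^*\|$, so $B^*y_0^*$ attains its norm on $Z$, and, on the other hand, $|y_0^*(Ay_0)|+\|B^*y_0^*\|=v(T)$, which is exactly the hypothesis $|y^*(Ay)|+\|B^*y^*\|\le|y_0^*(Ay_0)|+\|B^*y_0^*\|$ for all $(y,y^*)\in\Pi(Y)$ required by Lemma~\ref{lemma_on_Y}.

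Finally, applying Lemma~\ref{lemma_on_Y} to $A$, $B$ and $(y_0,y_0^*)$ I obtain that
$$
\lim_{t\to 0}\frac{\|B^*y_0^*+tB^*h^*\|+\|B^*y_0^*-tB^*h^*\|-2\|B^*y_0^*\|}{t}=0
$$
for every $h^*\in S_{Y^*}$; in particular this limit is $\le 0$. Since moreover $B^*y_0^*$ attains its norm on $Z$, Lemma~\ref{lemma-on-Z} applied to $B$ and $y_0^*$ yields that $B$ is of finite-rank, which finishes the proof. I do not expect a serious obstacle here: the analytic content sits entirely inside Lemmata~\ref{lemma_on_Y} and \ref{lemma-on-Z}, and the only points needing a little care are the identifications $P_YT=T$, $P_ZT=0$ (which make Lemma~\ref{lem:Rafa} directly usable) and the elementary computation that the supremum over $z\in B_Z$ splits off $\|B^*y^*\|$.
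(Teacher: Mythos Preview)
Your proof is correct and follows essentially the same route as the paper: use Lemma~\ref{lem:Rafa}(3) to produce $(y_0,y_0^*)\in\Pi(Y)$ and $z_0\in B_Z$ realizing the supremum, split off $\|B^*y^*\|$ by rotating $z$, then feed the resulting inequality into Lemma~\ref{lemma_on_Y} and the norm-attainment of $B^*y_0^*$ into Lemma~\ref{lemma-on-Z}. The only difference is that your detour through Lemma~\ref{lem:Rafa}(2) and the case split on $v(T)$ is unnecessary: since $P_YT=T$ and $T\in\NRA(X)$ by hypothesis, $P_YT\in\NRA(X)$ holds directly, so Lemma~\ref{lem:Rafa}(3) gives the attaining pair without any further argument.
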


\begin{proof}
Consider the projection $P_Y$ from $X$ onto $Y$. It is clear that $P_YT=T$ and Lemma~\ref{lem:Rafa}.(3) provides the existence of $(y_0,y_0^*)\in \Pi(Y)$ and $z_0\in B_Z$ such that
$$
|y^*(Ay+Bz)|\leq |y_0^*(Ay_0+Bz_0)|
$$
for every $(y,y^*)\in \Pi(Y)$ and every $z\in B_Z$. By rotating $z$, we actually get
$$
|y^*(Ay)|+|y^*(Bz)|\leq |y_0^*(Ay_0)|+|y_0^*(Bz_0)|
$$
or, equivalently,
\begin{equation}\label{eq:thm-AAP-desigualdad-funcion-maximo}
|y^*(Ay)|+|[B^*y^*](z)|\leq |y_0^*(Ay_0)|+|[B^*y_0^*](z_0)|.
\end{equation}
By taking supremum on $z\in B_Z$, we obtain
\begin{equation*}
|y^*(Ay)|+\|B^*y^*\|\leq |y_0^*(Ay_0)|+\|B^*y_0^*\|
\end{equation*}
for all $(y,y^*)\in \Pi(Y)$. As the norm of $Y^*$ is $C^2$ smooth at $Y^*\setminus \{0\}$, it follows from Lemma~\ref{lemma_on_Y} that
\begin{equation*}
\lim_{t\to 0}\frac{\|B^*y_0^*+tB^*h^*\|+\|B^*y_0^*-tB^*h^*\|-2\|B^*y_0^*\|}{t} = 0
\end{equation*}
for every $h^*\in S_{Y^*}$. On the other hand, when we take $(y,y^*)=(y_0,y_0^*)$ in equation \eqref{eq:thm-AAP-desigualdad-funcion-maximo}, we obtain
$$
\bigl|[B^*y_0^*](z)\bigr|\leq \bigl|[B^*y_0^*](z_0)\bigr|
$$
for every $z\in B_Z$, meaning that the functional $B^*y_0^*\in Z^*$ attains its norm at $z_0$. These two facts and the assumption on $Z$ allow us to apply Lemma~\ref{lemma-on-Z} to get that $B$ is of finite-rank.
\end{proof}

We are finally able to present the proof of the main result of the paper.

\begin{proof}[Proof of Example~\ref{example-main-negative}]
Fix $1<p<2$ and let $Y$ be a quotient of $\ell_p$ without the approximation property (use \cite[Theorem~2.d.6]{Linden-Tz} and \cite[Corollary~18.3.5]{Jarchow} for the existence). Then, there exist a closed subspace $Z$ of $c_0$ and a compact linear operator $S:Z\longrightarrow Y$ which cannot be approximated by finite-rank operators \cite[Theorem~18.3.2]{Jarchow}.
Write $X=Y \oplus_\infty Z$ and let $T\in K(X)$ be defined by
$$
T(y+z)=S(z) \qquad (y\in Y, z\in Z).
$$
Suppose, for the sake of contradiction, that there is a sequence $\{T_n\}$ in $\NRA(X)$ converging to $T$ in norm. We clearly have that $P_YT=T$ and $P_ZT=0$, where $P_Y,P_Z$ are the projections from $X$ onto $Y$ and $Z$, respectively. We get that $\{P_YT_n\}\longrightarrow T$, $\{P_ZT_n\}\longrightarrow 0$, so
$\{v(P_Y T_n)\}\longrightarrow v(T)=v(P_YT)$ and $\{v(P_ZT_n)\}\longrightarrow 0$. It follows from Lemma~\ref{lem:Rafa} that $v(T)=\|S\|>0$ and that $P_YT_n \in \NRA(X)$ for every $n$ large enough. Therefore, removing some terms of the sequence $\{T_n\}$ and replacing $T_n$ by $P_YT_n$, there is no restriction in assuming that
$$
T_n \in \NRA(X), \qquad P_YT_n=T_n  \qquad \forall n\in \N, \qquad \text{and} \qquad \|T_n-T\|\longrightarrow 0.
$$
Now, observe that for each $n\in \N$ there are operators $A_n\in L(Y)$ and $B_n\in L(Z,Y)$ such that
$$
T_n(y+z)=A_n(y)+B_n(z) \qquad (y\in Y,z\in Z).
$$
The norm of $Y^*$ is $C^2$-smooth (indeed, $Y^*$ is a subspace of $\ell_q$ with $q>2$ and then we may use \cite[Theorem~V.1.1]{D-G-Z}) and $Z$ is strongly flat by \cite[Lemma~2.2]{Mar-Racsam}, so we can use Proposition~\ref{all-together} with $T_n$ to conclude that $B_n$ is a finite-rank operator for every $n\in \N$. But this leads to a contradiction because $\{B_n\}$ converges in norm to $S$, finishing thus the proof.
\end{proof}

\section{Some positive results}\label{sect-positive}
Let us finish the paper with a small discussion about positive results on numerical radius attaining compact operators. First, we show that three conditions assuring the density of the set of numerical radius attaining operators also work for the case of compact operators. We need some definitions.

A Banach space $X$ has \emph{property $\alpha$} (respectively \emph{property $\beta$}) if there are two sets $\{ x_i\,:\, i \in I\} \subset S_X$, $\{ x^{*}_i\,:\, i \in I \} \subset S_{X^*}$ and a constant $0 \leq \rho <1$ such that conditions $(i)$, $(ii)$ and $(iii)_\alpha$ (resp.\ $(iii)_\beta$) below hold:
\begin{enumerate}
\item[$(i)$] $x^{*}_i(x_i)=1$, $\forall i \in I $;
\item[$(ii)$] $|x^*_i(x_j)|\leq \rho <1$ if $i, j \in I, i \ne j$;
\item[$(iii)_\alpha$] $B_{X}$ is the absolutely closed convex hull of $\{x_i\,:\, i\in I\}$;
\item[$(iii)_\beta$] $B_{X^*}$ is the absolutely weakly$^*$-closed convex hull of $\{x^*_i\,:\, i\in I\}$.
\end{enumerate}
We refer to \cite{Moreno} and references therein for more information and background. The prototype of space with property $\alpha$ is $\ell_1$. Examples of Banach spaces with property $\beta$ are closed subspaces of $\ell_\infty(I)$ containing the canonical copy of $c_0(I)$.

\begin{proposition}
Let $X$ be a Banach space having at least one of the following properties:
\begin{enumerate}
\item[(a)] $X$ has the Radon-Nikodym property;
\item[(b)] $X$ has property $\alpha$;
\item[(c)] $X$ has property $\beta$.
\end{enumerate}
Then, $\NRA(X)\cap K(X)$ is dense in $K(X)$.
\end{proposition}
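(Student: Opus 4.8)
The plan is to revisit, under each of the three hypotheses, the known proof that $\NRA(X)$ is dense in $L(X)$, and to observe that when the operator one wishes to approximate is compact, the numerical radius attaining operators produced by those proofs can be chosen compact as well. The common mechanism is that in each case the approximant $S$ is obtained from the given operator $T$ by adding a perturbation which is either of finite rank (cases (b) and (c)) or a norm convergent series of finite-rank operators (case (a)). Since $K(X)$ is a norm-closed subspace of $L(X)$ containing every finite-rank operator, a perturbation of either type belongs to $K(X)$; hence $S=T+(S-T)\in\NRA(X)\cap K(X)$ whenever $T\in K(X)$, which is the assertion.

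For (b) and (c) this is transparent from the construction. The density of $\NRA(X)$ for $X$ with property $\alpha$ or $\beta$ is obtained by a Lindenstrauss-type argument (see \cite{Moreno} and the references therein; cf.\ \cite{Lindens}): given $T\in L(X)$ and $\eps>0$, one selects finitely many indices among the distinguished data $\{x_i\}$, $\{x_i^*\}$ and modifies $T$ only along the finite-dimensional subspace they span, obtaining $S$ with $\|S-T\|<\eps$ of the form $S=T+R$ with $R$ of finite rank and $S$ attaining its numerical radius at one of the pairs $(x_{i_0},x_{i_0}^*)\in\Pi(X)$; the uniform separation $|x_i^*(x_j)|\le\rho<1$ for $i\ne j$ is precisely what prevents the remaining pairs from destroying the attainment. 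As $R$ has finite rank, $S$ is compact whenever $T$ is, and this settles (b) and (c).

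For (a) one invokes the theorem of Acosta and Pay\'a \cite{AP-RNP}, whose proof is a Bourgain-type argument: starting from $T$, one constructs inductively operators $T=T_0,T_1,T_2,\dots$ with $T_{n+1}=T_n+R_n$, where each $R_n$ alters $T_n$ only along finitely many directions and $\sum_n\|R_n\|<\eps$, the limit $S=\lim_n T_n=T+\sum_n R_n$ belonging to $\NRA(X)$. Since $\sum_n R_n$ is a norm limit of finite-rank operators it is compact, so $S\in K(X)$ whenever $T\in K(X)$. The one point that goes beyond a formal remark is exactly this claim that the perturbations in the Acosta--Pay\'a construction (equivalently, in the underlying Bourgain scheme) can be taken of finite rank: verifying it requires following the construction step by step and checking that at each stage $T_n$ is modified only along a finite-dimensional subspace, and this is where the main --- though largely routine --- work lies. (One could instead try to reduce (a) to the known density of norm attaining compact operators on spaces with the Radon-Nikod\'ym property via a passage from numerical radius attainment to norm attainment, but the direct inspection of \cite{AP-RNP} seems the more transparent route.)
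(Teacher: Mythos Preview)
Your overall strategy coincides with the paper's: in each of the three cases the known proof of density of $\NRA(X)$ works by adding a \emph{compact} perturbation to the given operator, so compact operators are sent to compact numerical radius attaining operators. The paper's proof is just this observation together with precise citations.

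However, your descriptions of the underlying constructions are inaccurate, and in fact you have the mechanisms of (a) and (b) essentially interchanged. For (a), the Acosta--Pay\'a theorem \cite[Theorem~2.4]{AP-RNP} produces a single \emph{rank-one} perturbation (the argument is a Bourgain--Stegall perturbed-optimization principle, not an inductive sequence $T_0,T_1,\dots$), so there is nothing to check and no ``main work'' here. For (b), the relevant reference is not a Lindenstrauss-type one-step finite-rank modification but Acosta's result \cite{Ac-use}, where the perturbation is \emph{nuclear} (a norm-convergent series of rank-one operators), hence compact but not in general of finite rank. For (c) the perturbation is indeed rank-one \cite[Theorem~5]{Ac-Beta}. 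None of this affects the validity of the conclusion, but you should correct the attributions and the description of the arguments before presenting the proof.
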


\begin{proof}
We first observe that if every operator in $X$ can be perturbed by a compact linear operator of arbitrarily small norm to obtain a numerical radius attaining operator, then $\NRA(X)\cap K(X)$ is dense in $K(X)$. Let us give references to show that this happens in the cases of the proposition.
\begin{enumerate}
\item[(a)] In a Banach space with the Radon-Nikodym property, every operator may be perturbed by a rank-one operator of arbitrarily small norm to obtain an operator which attains its numerical radius \cite[Theorem~2.4]{AP-RNP}.
\item[(b)] It is shown in \cite{Ac-use} that in a Banach space with property $\alpha$, given $T\in L(X)$ and $\varepsilon >0$, there exists a nuclear operator $A$ with nuclear norm less than $\varepsilon$ such that $T+A$ attains its numerical radius.
\item[(c)] From \cite[Theorem~5]{Ac-Beta}, every operator in a space with property $\beta$ may be perturbed by a rank-one operator of arbitrarily small norm to obtain a numerical radius attaining operator.\qedhere
\end{enumerate}
\end{proof}

Our next result deals with the so-called CL-spaces. A Banach space $X$ is a \emph{CL-space} if its unit ball is equal to the absolutely convex hull of every maximal convex set of its unit sphere. Examples of CL-spaces are the real or complex $C(K)$ spaces and the real spaces $L_1(\mu)$. We refer the reader to \cite{MartPaya-CL} and references therein for more information and background. Let $X$ be a CL-space. Then, for every $T\in L(X)$ one has that $v(T)=\|T\|$ and that $T$ attains its norm if and only if $T$ attains its numerical radius \cite{Acosta-CL}. Therefore, the following result is immediate.

\begin{proposition}\label{prop-CL}
Let $X$ be a CL-space. If the set of norm attaining compact operators is dense in $K(X)$, then $\NRA(X)\cap K(X)$ is dense in $K(X)$.
\end{proposition}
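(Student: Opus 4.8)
The plan is to deduce this directly from the characterisation of numerical radius attaining operators on CL-spaces recalled just above. By the result of Acosta \cite{Acosta-CL}, on a CL-space $X$ every $T\in L(X)$ satisfies $v(T)=\norm{T}$, and $T$ attains its norm if and only if $T$ attains its numerical radius. In particular, the set of norm attaining operators on $X$ coincides with $\NRA(X)$, and intersecting both sides with $K(X)$ shows that the norm attaining compact operators on $X$ are exactly the operators in $\NRA(X)\cap K(X)$.

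With this in hand the argument is immediate. Given $T\in K(X)$ and $\eps>0$, the density hypothesis provides a norm attaining operator $S\in K(X)$ with $\norm{T-S}<\eps$; by the equivalence above, $S\in \NRA(X)\cap K(X)$. Since $T$ and $\eps$ were arbitrary, this proves that $\NRA(X)\cap K(X)$ is dense in $K(X)$.

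I do not anticipate any genuine obstacle here: the only subtlety is to invoke the equivalence between norm attainment and numerical radius attainment for the \emph{compact} operator furnished by the hypothesis rather than for an arbitrary bounded operator, but since this equivalence is valid for every element of $L(X)$ (with no compactness restriction), it transfers to $K(X)$ without any extra work. This is precisely why the statement can be regarded as immediate once Acosta's characterisation is available.
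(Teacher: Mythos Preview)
Your argument is correct and is exactly the paper's approach: the paper declares the proposition ``immediate'' from Acosta's characterisation that on a CL-space $v(T)=\|T\|$ and norm attainment coincides with numerical radius attainment, which is precisely what you spell out.
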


As a corollary, using the result by J.~Johnson and J.~Wolfe that every compact linear operator having $C(K)$ or $L_1(\mu)$ as domain or range can be approximated by norm attaining compact operators \cite[Theorems 3 and 4]{JoWo}, we get the following.

\begin{corollary}
Let $X$ be a real or complex $C(K)$ space or a real $L_1(\mu)$ space. Then $\NRA(X)\cap K(X)$ is dense in $K(X)$.
\end{corollary}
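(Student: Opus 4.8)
The plan is to obtain this as a direct consequence of Proposition~\ref{prop-CL} combined with the Johnson--Wolfe density theorem. The first step is to observe that each space $X$ in the statement is a CL-space: this is classical for the real and complex $C(K)$ spaces and for the real $L_1(\mu)$ spaces (see \cite{MartPaya-CL} and the discussion preceding Proposition~\ref{prop-CL}). Consequently, Proposition~\ref{prop-CL} applies to $X$, so the claim is reduced to showing that the set of norm attaining compact operators on $X$ is dense in $K(X)$.

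The second step is to invoke \cite[Theorems~3 and~4]{JoWo}: Johnson and Wolfe proved that every compact linear operator having a $C(K)$ space or an $L_1(\mu)$ space as its domain \emph{or} as its range can be approximated in operator norm by norm attaining compact operators. In our situation $X$ is simultaneously the domain and the range of every $T\in K(X)$, so this result furnishes exactly the density of the norm attaining compact operators in $K(X)$. Feeding this into Proposition~\ref{prop-CL} yields that $\NRA(X)\cap K(X)$ is dense in $K(X)$, which is the assertion.

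I do not expect a genuine obstacle here; the only points requiring mild care are bookkeeping ones. One must check that the perturbations supplied by Johnson and Wolfe remain within $K(X)$ (they do, being themselves compact), so that the approximation takes place inside $K(X)$ rather than merely inside $L(X)$; and one must keep track of the scalar field, using the complex $C(K)$ statement of \cite{JoWo} in the complex case and the $L_1(\mu)$ statement only in the real case, since the CL-space property invoked through Proposition~\ref{prop-CL} is not claimed for complex $L_1(\mu)$.
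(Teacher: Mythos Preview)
Your proposal is correct and follows exactly the paper's own route: invoke that the spaces in question are CL-spaces, reduce via Proposition~\ref{prop-CL} to density of norm attaining compact operators in $K(X)$, and obtain the latter from \cite[Theorems~3 and~4]{JoWo}. The additional bookkeeping remarks you make (compactness of the Johnson--Wolfe perturbations, scalar-field tracking) are accurate and go slightly beyond what the paper spells out, but the substance is identical.
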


We finish the paper with another consequence of Proposition~\ref{prop-CL}. Compare it with Example~\ref{example:L1+C-negative}.

\begin{example}\label{example:L1+C-positive-compact}
{\slshape Compact linear operators on the real spaces
$$
X=C[0,1]\oplus_1 L_1[0,1] \qquad \text{and} \quad Y=C[0,1]\oplus_\infty L_1[0,1]
$$
can be approximated by numerical radius attaining compact operators.}
\end{example}

\begin{proof}
Being a CL-space is stable by finite $\ell_1$-sum \cite[Proposition~9]{MartPaya-CL}, so $X$ is a CL-space. Therefore, by Proposition~\ref{prop-CL}, it is enough to show that norm attaining compact operators on $X$ are dense. Indeed, by \cite[Theorems 3 and 4]{JoWo}, we have that norm attaining compact operators in both $L(C[0,1],X)$ and $L(L_1[0,1],X)$ are dense. Then, norm attaining compact operators on $L(X)$ are dense by \cite[Lemma~3.7]{Mar-Racsam}. A dual argument can be given for $Y$, using \cite[Proposition~8]{MartPaya-CL}, \cite[Theorems 3 and 4]{JoWo} again, and an obvious adaptation to compact operators of \cite[Lemma~2]{PayaSaleh}.
\end{proof}

\end{document}